\def\NAT@def@citea{\def\@citea{\NAT@separator}}
\theoremstyle{plain}
\newtheorem{theorem}{Theorem}[section]
\newtheorem{lemma}[theorem]{Lemma}
\newtheorem{corollary}[theorem]{Corollary}
\theoremstyle{definition}
\theoremstyle{remark}
\newtheorem{remark}{Remark}
\begin{document}

\begin{center}
{\bf \LARGE Comparison of Nonlocal Nonlinear Wave Equations in the Long-Wave Limit}
\\ ~ \\ \vspace*{20pt}
H. A. Erbay$^{1}$, S. Erbay$^{1}$,  A. Erkip$^{2}$
\vspace*{10pt}

$^{1}$Department of Natural and Mathematical Sciences, Faculty of Engineering, Ozyegin University,  Cekmekoy 34794, Istanbul, Turkey
\vspace*{10pt}

$^{2}$Faculty of Engineering and Natural Sciences, Sabanci University,  Tuzla 34956,  Istanbul,    Turkey

\end{center}

\let\thefootnote\relax\footnote{E-mail:   husnuata.erbay@ozyegin.edu.tr, saadet.erbay@ozyegin.edu.tr, \\
albert@sabanciuniv.edu}

\begin{abstract}
    We consider  a general class of  convolution-type  nonlocal wave equations  modeling bidirectional nonlinear wave propagation.  The model involves two  small positive parameters  measuring the relative strengths of the nonlinear and dispersive effects.  We take two different kernel functions that have  similar dispersive   characteristics  in the long-wave limit and compare the corresponding solutions of the Cauchy problems  with the same initial data.   We  prove rigorously that  the difference between the two solutions remains small over a long time interval in a suitable Sobolev norm. In particular our results show that, in the long-wave limit, solutions of such nonlocal equations can be well approximated by those of  improved Boussinesq-type equations.
\end{abstract}
\vspace*{10pt}

\noindent
   Keywords: Approximation;  nonlocal wave equation; improved Boussinesq equation; long wave limit.
   \vspace*{10pt}
   
\noindent
    2010 AMS Subject Classification:       35Q53; 35Q74; 74J30; 35C20
    \vspace*{10pt}

\setcounter{equation}{0}
\section{Introduction}\label{sec1}

In the present work we start with the  nonlocal wave equation:
\begin{eqnarray}
  && u_{tt}=\beta_{\delta }\ast ( u+\epsilon^{p} u^{p+1})_{xx},  ~~~~ x\in \mathbb{R}, ~~~t>0, \label{nw} \\
  && u(x,0)=u_{0}(x), ~~~~ u_{t}(x,0)=u_{1}(x),  ~~~~ x\in \mathbb{R}, \label{inidata}
\end{eqnarray}%
where $u=u(x,t)$ is a real-valued function, $\epsilon $ and $\delta $ are two small positive parameters measuring the effect of nonlinearity and the effect of dispersion, respectively, $p$ is a positive integer, the symbol $\ast$ denotes convolution in the $x$-variable and $\beta_{\delta }(x) =\frac{1}{\delta }\beta (\frac{x}{\delta })$ where $\beta$ is a kernel function.  The two parameter family (\ref{nw}) is obtained from a single equation  given with the kernel  $\beta(x)$ through a suitable scaling.  Our main purpose  is to investigate the dependence of the solutions of (\ref{nw})-(\ref{inidata}) on the kernel $\beta$ in the long-wave limit.  To that end, we take two different kernel functions for which the corresponding convolution operators are elliptic (and bounded) and compare the corresponding solutions in the case of the same initial data.  We  prove that,  over a long time interval of length of order $1/\epsilon^{p}$,  the difference between the two solutions  is of order $\delta^{2k}$ (uniformly in $\epsilon$) where the integer $k$ is determined by the dispersive behaviors of the two kernels.

The nonlocal equation (\ref{nw}) describes the one-dimensional motion of a nonlocally and nonlinearly elastic medium and $u$ represents the strain (we refer the reader to \cite{Duruk2010} for a detailed description of the nonlocally and nonlinearly  elastic medium).  The dispersive nature of the elastic medium  described by (\ref{nw}) is revealed by the kernel $\beta_{\delta}$ and the parameter $\delta$ is a measure of wavenumber so that smaller $\delta$ implies longer wavelength. The parameter $\epsilon$ assures that the solutions exist over a long time interval of length of order $1/\epsilon^{p}$ (see \cite{Erbay2018}). For particular choices of the kernel function, (\ref{nw}) involves many well-known nonlinear wave equations. Two well-known examples that appear as  model equations for various physical problems are   the improved Boussinesq  equation
\begin{equation}
    u_{tt}-u_{xx}-\delta^{2}u_{xxtt}=\epsilon^{p} (u^{p+1})_{xx}  \label{ib}
\end{equation}%
corresponding to the exponential kernel $\beta_{\delta}(x)= \frac{1}{2\delta}e^{- \left\vert x\right\vert/\delta}$  and the classical elasticity equation
\begin{equation}
   u_{tt}-u_{xx}= \epsilon^{p} (u^{p+1})_{xx} \label{clas}
\end{equation}%
corresponding to the Dirac measure. We refer the reader to \cite{Duruk2010} for other examples of the kernels widely used in continuum mechanics.

A relevant question  is how the choice of the kernel function of (\ref{nw}) affects solutions. In that respect one needs a measure for the closeness of two kernels. In this work we show that the dispersive characteristics of  the kernel in the long-wave limit provides a suitable measure. We note that in the long-wave limit the Fourier modes are concentrated about the small wave numbers and  therefore  the dispersive nature of (\ref{nw}) in the small wavenumber regime is related to the Taylor expansion of the Fourier transform of the kernel around zero. As the Taylor coefficients are determined by the moments of the kernel, we can explicitly classify "close" kernels by comparing their moments and  investigate how the family of solutions of (\ref{nw})  depend on the kernel. To be more precise we take two kernels $\beta^{(1)}$ and $\beta^{(2)}$ with the same  moments up to order $2k$  and consider  the corresponding solutions $u_{1}^{\epsilon,\delta}$ and $u_{2}^{\epsilon,\delta}$  of (\ref{nw})-(\ref{inidata}) with the same initial values. We then prove that,  for a suitable norm, $\Vert u_{1}^{\epsilon,\delta}(t)-u_{2}^{\epsilon,\delta}(t)\Vert$  is of order $\delta^{2k}$ over a long time interval of length of order $1/\epsilon^{p}$.  This shows that for varying kernels with the same dispersive nature, solutions of (\ref{nw}) approximate each other  and the approximation errors originate from the dispersive nature of the kernels rather than from their shapes.   We note that, in the terminology of some authors,  our long-time existence results \cite{Erbay2018} together with the work presented here  are in fact consistency, existence, convergence results for the nonlocal bidirectional approximations of the nonlocal equation (\ref{nw}). We refer to \cite{Bona2005,Constantin2009,Lannes2012,Lannes2013,Duchene2015} and the references therein for a detailed discussion of these concepts.

This work differs from most previous studies available in the literature for several reasons; it does not focus on  unidirectional solutions and it compares solutions of two families of equations of the same type. In the literature, there have been a number of  works comparing solutions of  a parent equation  with those of a model equation describing the unidirectional propagation of long waves. The most well-known examples of one-dimensional model equations are the Korteweg-de Vries (KdV) equation \cite{Korteweg1895}, the Benjamin-Bona-Mahony (BBM) equation \cite{Benjamin1972}, the Camassa-Holm (CH) equation  \cite{Camassa1993} and the Fornberg-Whitham (FW) equation \cite{Fornberg1978}.   For a comprehensive treatment of the methods on comparisons of solutions between those unidirectional equations and the parent equations (for instance, in the context of a shallow water approximation) we refer to \cite{Craig1985,Schneider2000,Alazman2006} for the case of the KDV and the BBM equations  and to \cite{Constantin2009,Duchene2015} for the case of the CH equation (see, for instance, \cite{Gallay2001,Lannes2012} for comparisons of two-dimensional model equations). In a recent study \cite{Erbay2016}, the present authors have made  similar comparisons  between (\ref{nw}) in the context of nonlocal elasticity and the CH equation.  For emphasis, we remind the reader that all these studies consider unidirectional approximations of nonlinear dispersive  equations  whereas the present work is about bidirectional approximations. Another strength of this work lies in its level of generality because it poses minimal restrictions on the kernel.

The structure of the paper is as follows. Section \ref{sec2} is devoted to preliminaries. In Section \ref{sec3} we state the moment conditions to be satisfied by the kernels, prove the main result that establishes the estimate on the difference between two families of solutions, and  discuss how our general result can be applied to certain cases.

Throughout this paper, we use the standard notation  for  function spaces. The Fourier transform $\widehat u$ of $u$ is defined by $\widehat u(\xi)=\int_\mathbb{R} u(x) e^{-i\xi x}dx$. The $L^p$ ($1\leq p <\infty$)  norm of $u$ on $\mathbb{R}$ is represented by  $\Vert u\Vert_{L^p}$. To denote the inner product of $u$ and $v$ in $L^2$, the symbol $\langle u, v\rangle$ is used.   The notation  $H^{s}=H^s(\mathbb{R})$ is used to denote the $L^{2}$-based Sobolev space of order $s$ on $\mathbb{R}$,  with the norm $\Vert u\Vert_{H^{s}}=\big(\int_\mathbb{R} (1+\xi^2)^s \vert \widehat u(\xi)\vert^2 d\xi \big)^{1/2}$.  All integrals in this paper extend over the whole real line and the limits of integration will not be explicitly written. $C$ is a generic positive constant.  Partial differentiations are denoted by  $D_{x}$ etc.

\section{Preliminaries}\label{sec2}

In this section we give a brief discussion of the nonlocal equation and provide some preliminaries. We first note that the family of equations (\ref{nw}) can be obtained from the single   equation
\begin{equation}
    u_{tt}=\beta \ast (u+\epsilon^{p}u^{p+1})_{xx},  \label{origin}
\end{equation}%
under the change of variables $(x,t)\to (x/\delta, t/\delta)$. The nonlocal equation (\ref{origin}) with a general even kernel $\beta$ was proposed in \cite{Duruk2010} to model longitudinal motions in nonlinear nonlocal elasticity, in terms of non-dimensional quantities.

In \cite{Duruk2010}, local well-posedness of the Cauchy problem for (\ref{origin}) (and hence for (\ref{nw})) was  proved  under the regularization assumption
\begin{equation}
    0\leq \widehat{\beta }(\xi )\leq C \left( 1+\xi ^{2}\right) ^{-r/2} \label{order}
\end{equation}%
for some $r\geq 2$. In this case (\ref{origin}) becomes an $H^{s}$-valued ordinary differential equation (ODE).  In a recent work \cite{Erbay2018}, this result has been improved in two ways. First,  the condition (\ref{order}) has been replaced by the ellipticity and boundedness condition
\begin{equation}
    c_{1}^{2}\leq \widehat{\beta }(\xi )\leq c_{2}^{2}  \label{bounds}
\end{equation}%
for some $c_{1}, c_{2}>0$. The condition (\ref{bounds}) implies that the convolution operator is invertible. On the other hand,  (\ref{bounds}) lacks the regularization effect of (\ref{order}) for $r \geq 2$. Hence the regularity requirement $r\geq 2$ in (\ref{order}) is replaced by the much weaker condition $r\geq 0$. Secondly, the long-time existence of solutions to  the family of initial-value problems of (\ref{origin}) has been established  for times up to ${\cal O}(1/\epsilon^{p})$.  We note that when $0\leq r<2,$ the nonlocal equation (\ref{nw}) is no longer an ODE but the parameter $\epsilon $ can be chosen small enough so that it is in the hyperbolic regime. Theorem 5.2 of  \cite{Erbay2018} is as follows:
\begin{theorem}\label{theo2.1}
    Suppose the kernel $\beta $ satisfies the ellipticity and boundedness condition (\ref{bounds}). Let $D>3$, $P>P_{min}$, $~s>\frac{7}{2}$ and $\left( u_{0}^{\epsilon },w_{0}^{\epsilon }\right) $ be bounded in $H^{s+P}\times H^{s+P}$. Then there exist some $\epsilon _{0}>0$, $~T>0$ and a unique family $(u^{\epsilon })_{0<\epsilon <\epsilon _{0}}$ bounded in $C\big( [0,{\frac{T}{\epsilon ^{p}}}];H^{s+D}\big) \cap C^{1}\big( [0,{\frac{T}{\epsilon ^{p}}}];H^{s+D-1}\big) $ and satisfying (\ref{origin}) with initial values $u_{0}=u_{0}^{\epsilon }$,
    $u_{1}=\big(w_{0}^{\epsilon }\big)_{x}$.
\end{theorem}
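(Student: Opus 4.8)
The plan is to recast the scalar equation (\ref{origin}) as a first-order system that matches the prescribed data. Since the initial velocity is given in the divergence form $u_1=(w_0^\epsilon)_x$, the natural unknowns are $(u,w)$ governed by
\[ u_t = D_x w, \qquad w_t = D_x\,\beta\ast(u+\epsilon^p u^{p+1}), \qquad (u,w)|_{t=0}=(u_0^\epsilon,w_0^\epsilon), \]
which reproduces (\ref{origin}) because $u_{tt}=D_x w_t=\beta\ast(u+\epsilon^p u^{p+1})_{xx}$ and gives $u_t(0)=(w_0^\epsilon)_x$. Writing $\Lambda^\sigma$ for the Fourier multiplier with symbol $(1+\xi^2)^{\sigma/2}$ and using that the even kernel makes $\beta\ast$ symmetric on $L^2$, I would build the whole argument around the kernel-adapted energy, for $\sigma=s+D$,
\[ E_\sigma(t)=\tfrac12\langle \Lambda^\sigma w,\Lambda^\sigma w\rangle+\tfrac12\langle \Lambda^\sigma u,\beta\ast\Lambda^\sigma u\rangle . \]
By Plancherel and the ellipticity/boundedness bounds (\ref{bounds}) one has $c_1^2\|u\|_{H^\sigma}^2\le\langle \Lambda^\sigma u,\beta\ast\Lambda^\sigma u\rangle\le c_2^2\|u\|_{H^\sigma}^2$, so $E_\sigma$ is equivalent to $\|w\|_{H^\sigma}^2+\|u\|_{H^\sigma}^2$ uniformly in $\epsilon$ and in the kernel, and it controls the norms $\|u\|_{H^{s+D}}$ and $\|u_t\|_{H^{s+D-1}}=\|D_xw\|_{H^{s+D-1}}$ appearing in the statement.

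Differentiating $E_\sigma$ along the flow, the quadratic (linear) terms cancel exactly, since $D_x$ is skew-adjoint and $\beta\ast$ is symmetric and commutes with $\Lambda^\sigma$ and $D_x$, leaving only the nonlinear contribution $\frac{d}{dt}E_\sigma=\epsilon^p\langle \Lambda^\sigma w,\ \beta\ast D_x\Lambda^\sigma(u^{p+1})\rangle$. Thus \emph{all} of the time growth is carried by the explicit factor $\epsilon^p$, which is exactly the mechanism producing the time scale $1/\epsilon^p$. The delicate point is that (\ref{bounds}) provides no smoothing (the regularity exponent may be as low as $r=0$), so $\beta\ast D_x^2$ is genuinely second order and the top-order part of the nonlinearity, $\epsilon^p(p+1)\,\beta\ast(u^p\,\Lambda^\sigma D_x u)$, appears to cost one derivative beyond what $E_\sigma$ controls. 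I would remove this apparent loss by a quasilinear symmetrization: after peeling off the commutator $[\Lambda^\sigma,u^p]$ (bounded by $E_\sigma$ through a Kato--Ponce estimate, using $s>\tfrac72$ and $D>3$ to control the $L^\infty$-type norms of $u$ and its low derivatives), I would integrate the principal term by parts against the symmetric operator $\beta\ast$ so that it becomes $-\frac{d}{dt}$ of a solution-dependent correction of the schematic form $\tfrac{\epsilon^p(p+1)}{2}\langle u^p\,\Lambda^\sigma u,\beta\ast\Lambda^\sigma u\rangle$, plus a remainder in which every surviving term carries a factor $\partial_t(u^p)$ or $D_x(u^p)$ and hence no longer loses a derivative. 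Absorbing the correction into a modified energy $\widetilde E_\sigma$, which remains equivalent to $E_\sigma$ once $\epsilon_0$ is chosen small, I would obtain the closed differential inequality $\frac{d}{dt}\widetilde E_\sigma\le C\epsilon^p\,G(\widetilde E_\sigma)$ with $G$ continuous and increasing and $C$ independent of $\epsilon$ and of the kernel.

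Granting this a priori inequality, the long-time bound follows by comparison with the solution $y$ of $y'=C\epsilon^p G(y)$, $y(0)=\widetilde E_\sigma(0)$: because the data are bounded in $H^{s+P}$ uniformly in $\epsilon$ with $P>P_{min}$, the initial energy is bounded independently of $\epsilon$, and $y$ blows up only at a time of order $1/\epsilon^p$, so there is a fixed $T>0$ for which $\widetilde E_\sigma$, and hence $E_\sigma$, stays bounded on $[0,T/\epsilon^p]$ uniformly in $0<\epsilon<\epsilon_0$. To convert these bounds into an actual solution I would first get short-time existence and uniqueness for each fixed $\epsilon$ by a Friedrichs regularization (mollify the system into an $H^\sigma$-valued ODE, solve by Picard iteration, derive the same energy inequality uniformly in the regularization parameter, and pass to the limit by an Aubin--Lions/weak-$\ast$ compactness argument), with uniqueness from the energy estimate for the difference of two solutions. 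The surplus regularity of the data, $s+P$ above the solution index $s+D$, is precisely what is consumed by this limiting procedure and by the Moser and commutator estimates. A standard continuation argument then glues the local solutions: as long as $\widetilde E_\sigma$ stays below the a priori bound the solution extends, and the uniform bound forces persistence up to $t=T/\epsilon^p$.

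The main obstacle I anticipate is the quasilinear energy estimate in the nonlocal setting, namely establishing rigorously that the top-order nonlinear term costs no derivative. This is complicated by the fact that $\beta\ast$ does not commute with multiplication by $u^p$, so the symmetrization must be carried out in the $\beta$-weighted pairing and generates commutators $[\beta\ast,u^p]$ whose $L^2\to L^2$ norms must be controlled using only (\ref{bounds}) — with no decay of $\widehat\beta$ available to help — together with the bookkeeping induced by the relation $u_t=D_xw$, which must not reintroduce the lost derivative. Once that estimate is proved with an $\epsilon$-independent and kernel-independent constant, the remaining comparison-ODE and continuation steps are essentially routine.
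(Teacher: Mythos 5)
You should know at the outset that this paper never proves Theorem \ref{theo2.1}: it is quoted verbatim from \cite{Erbay2018} (Theorem 5.2 there), where it is obtained by verifying the hypotheses of the Nash--Moser scheme of \cite{Alvarez2008}. The otherwise strange hypotheses --- $D>3$, $P>P_{\min}=3+\frac{D}{D-3}\big(\sqrt{3}+\sqrt{2D}\big)^{2}$, i.e.\ a gap of roughly $48$ derivatives between data and solution --- are artifacts of that iteration, and no direct energy argument would ever produce them. So your proposal is necessarily a different route; that in itself is not an objection, but your route breaks down at precisely the step you flag as ``the main obstacle'', and the estimate you defer there is not merely delicate --- it is false under (\ref{bounds}) alone. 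After your symmetrization, the two top-order terms (the one from $\frac{d}{dt}E_{\sigma}$ and the one from your correction $\tfrac{\epsilon^{p}(p+1)}{2}\langle u^{p}\Lambda^{\sigma}u,\beta\ast\Lambda^{\sigma}u\rangle$) do not cancel; their difference is exactly
\begin{equation*}
\epsilon^{p}(p+1)\,\big\langle \Lambda^{\sigma}w,\;[\beta\ast,u^{p}]\,\Lambda^{\sigma}u_{x}\big\rangle ,
\end{equation*}
so closing the inequality requires $\Vert [\beta\ast,f]g_{x}\Vert_{L^{2}}\leq C\Vert g\Vert_{L^{2}}$ with $C$ depending on the kernel only through $c_{1},c_{2}$, i.e.\ the commutator must gain one derivative. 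No such bound exists: (\ref{bounds}) carries no smoothness of $\widehat{\beta}$, and in fact rules out the usual mechanisms, since $\widehat{\beta}\geq c_{1}^{2}>0$ forces $\xi\widehat{\beta}(\xi)$ to be unbounded, so $\beta^{\prime}$ is never an integrable function or finite measure and no Calder\'on-type commutator estimate applies. A concrete counterexample inside the paper's class (finite measures are explicitly admitted): $\beta=\tfrac{2}{3}\delta_{0}+\tfrac{1}{6}(\delta_{a}+\delta_{-a})$, so $\widehat{\beta}(\xi)=\tfrac{2}{3}+\tfrac{1}{3}\cos (a\xi)\in[\tfrac{1}{3},1]$. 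Then $[\beta\ast,f]g_{x}$ contains $\tfrac{1}{6}\big(f(x-a)-f(x)\big)g^{\prime}(x-a)$, and testing with $g$ oscillating at frequency $M$ near the origin (with $f$ a bump supported near $0$) gives output of size $M\Vert g\Vert_{L^{2}}$: the operator $[\beta\ast,f]D_{x}$ is unbounded on $L^{2}$. Your modified energy therefore cannot be closed, however the bookkeeping is arranged.

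The repair is not to prove that commutator bound but to restructure so the commutator never appears, and this is exactly what the paper's own first-order formulation (\ref{systema})--(\ref{systemb}) is designed for: split $\beta\ast=K^{2}$ with $K=(\beta\ast)^{1/2}$, which under (\ref{bounds}) alone is self-adjoint, bounded, invertible, and commutes with $D_{x}$ and $\Lambda^{\sigma}$, and set $v=K^{-1}w$, so that $u_{t}=Kv_{x}$, $v_{t}=K(au_{x})$ with $a=1+\epsilon^{p}(p+1)u^{p}$. With the energy $\tfrac{1}{2}\Vert \Lambda^{\sigma}v\Vert_{L^{2}}^{2}+\tfrac{1}{2}\langle a\Lambda^{\sigma}u,\Lambda^{\sigma}u\rangle$ (equivalently, in your variables: weight $w$ by $(\beta\ast)^{-1}$, rather than weighting $u$ by $\beta\ast$), the dangerous pairing cancels identically, $\langle K\Lambda^{\sigma}v,\,a\Lambda^{\sigma}u_{x}\rangle-\langle a\Lambda^{\sigma}u_{x},\,K\Lambda^{\sigma}v\rangle=0$, because $K$ can be moved across the inner product; what survives are only $[\Lambda^{\sigma},a]$-commutators, controlled by Lemma \ref{lem3.1}, and terms carrying $a_{x},a_{t}=\mathcal{O}(\epsilon^{p})$, which produce the desired factor $\epsilon^{p}$. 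This is the same mechanism as Step 5 of the proof of Theorem \ref{theo3.2}, where $K_{\delta}^{(2)}$ is eliminated via the identity $\langle r,K_{\delta}^{(2)}\rho_{x}\rangle_{s+D}=-\langle \rho,K_{\delta}^{(2)}r_{x}\rangle_{s+D}$ and only $[\Lambda^{s+D},w]$ commutators remain. Even after this repair, be aware that what you would obtain is a clean variant of long-time existence, not Theorem \ref{theo2.1} as stated: the statement's regularity bookkeeping ($P>P_{\min}$, uniformly bounded families in $H^{s+D}$) is meaningful only relative to the Nash--Moser construction of \cite{Erbay2018}, and your local-existence step for the linearized, variable-coefficient, rough-symbol system would still have to be carried out in detail rather than cited as routine.
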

The numbers $P$ and $D$ of this theorem are related to the required extra smoothness and the restrictions on these numbers are due to the  Nash-Moser scheme used in the proof of Theorem  \ref{theo2.1} (see also \cite{Alvarez2008} for more details). For a given $D>3$, the number $P_{\min }$ is determined as $~P_{\min }=3+{\frac{D}{D-3}}\left(\sqrt{3}+\sqrt{2D}\right) ^{2}$.  The computation in \cite{Erbay2018} shows that the optimal values of $D$ and $P$ are approximately $7.35$ and  $55.34$, respectively.

Throughout  this work we will assume that the kernel  $\beta(x)$ is an even function with $\int \beta(x) dx=1$ and satisfies (\ref{bounds}).

In order to write the nonlocal equation (\ref{origin}) as a first-order system we now introduce the pseudo-differential operator
\begin{equation}
    Kw(x)=\mathcal{F}^{-1}\Big( k(\xi )\widehat{w}(\xi )\Big),  \label{K1}
\end{equation}%
where $k(\xi )=\sqrt{\widehat{\beta }(\xi )}$ and $\mathcal{F}^{-1}$ denotes the inverse Fourier transform. We note that $K^{2}w=\beta \ast w$ and, by  (\ref{bounds}), $K$ is an invertible bounded operator on $H^{s}$. We  then convert  (\ref{origin}) to
\begin{eqnarray}
    && u_{t} =Kv_{x},  \label{systema} \\
    && v_{t} =K\left(u+\epsilon^{p}u^{p+1}\right)_{x}. \label{systemb}
\end{eqnarray}
As mentioned earlier,   applying the coordinate transformation $(x,t)\to (x/\delta, t/\delta)$ to (\ref{origin}) yields (\ref{nw}) with $\beta _{\delta }(x)={\frac{1}{\delta }}\beta (\frac{x}{\delta })$ (note that   the Fourier domain counterpart of the latter is $\widehat{\beta _{\delta }}(\xi )=\widehat{\beta }(\delta\xi )$). Similarly, with the same coordinate transformation, the initial-value problem for (\ref{systema})-(\ref{systemb}) reduces to
\begin{eqnarray}
   && u_{t} =K_{\delta }v_{x},~~~~~~~~~~~~~~~~~~~~u(x,0)=u_{0}(x),  \label{system1} \\
   && v_{t} =K_{\delta }\left(u+\epsilon^{p}u^{p+1}\right)_{x},~~~~v(x,0)=v_{0}(x),  \label{system2}
\end{eqnarray}%
with the operator $K_{\delta }w=\mathcal{F}^{-1}\big(k_{\delta}(\xi)\widehat{w}(\xi)\big)$ and $k_{\delta}(\xi)=k(\delta\xi)$.  Clearly, when  $u_{1}=(w_{0})_{x}$ with $v_{0}=K_{\delta }^{-1}w_{0} $ the initial-value problem (\ref{nw})-(\ref{inidata}) becomes equivalent to  (\ref{system1})-(\ref{system2}); hence we have long time existence and uniform bounds for the solutions $\left(u^{\epsilon,\delta},v^{\epsilon,\delta}\right)$ of (\ref{system1})-(\ref{system2}). In fact the long-time existence result  in \cite{Erbay2018} was first proved for (\ref{system1})-(\ref{system2}) and then extended to (\ref{nw})-(\ref{inidata}).
\begin{remark}\label{rem2.2}
    The uniform bounds for the solution in Theorem \ref{theo2.1} depend only on $T$, the bounds on the initial data and the bound for the operator $K.$ Due to (\ref{bounds}), the family of operators $K_{\delta }$ all have the same bound as $K$; therefore Theorem \ref{theo2.1} also holds uniformly in $\delta$ when $K$ is replaced by $K_{\delta }$ or $\beta $ is replaced by $\beta_{\delta }$.
\end{remark}
\begin{corollary}\label{cor2.3}
    Suppose the kernel $\beta $ satisfies the ellipticity and boundedness condition (\ref{bounds}). Let $D>3$, $P>P_{min}$, $~s>\frac{7}{2}$ and $(\varphi ,\psi )$ $\in $ $H^{s+P}\times H^{s+P}$. Then there are some $\epsilon _{0}>0,$ $T>0$ so that for all $0<\epsilon <\epsilon_{0}$,  the family of Cauchy problems for (\ref{nw})  with initial values $\left(u_{0},u_{1}\right) $\ $=\left( \varphi ,\psi _{x}\right) $  have unique solutions $u^{\epsilon, \delta}$ on the interval $[0,{\frac{T}{\epsilon^{p}}}]$, uniformly bounded in $C\big( [0,{\frac{T}{\epsilon ^{p}}}];H^{s+D}\big) \cap C^{1}\big( [0,{\frac{T}{\epsilon ^{p}}}];H^{s+D-1}\big) $.
\end{corollary}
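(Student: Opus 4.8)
The plan is to reduce the Cauchy problem for (\ref{nw}) to the first-order system (\ref{system1})--(\ref{system2}) and then invoke Theorem \ref{theo2.1} together with Remark \ref{rem2.2}, with $\beta$ replaced by $\beta_{\delta}$ (equivalently, $K$ by $K_{\delta}$). The first observation I would record is that $\beta_{\delta}$ inherits the ellipticity and boundedness condition (\ref{bounds}) from $\beta$ with the \emph{same} constants $c_{1},c_{2}$, uniformly in $\delta$: since $\widehat{\beta_{\delta}}(\xi)=\widehat{\beta}(\delta\xi)$, the symbol of $\beta_{\delta}$ takes values in the same interval $[c_{1}^{2},c_{2}^{2}]$ as that of $\beta$, merely reparametrized. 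Consequently $K_{\delta}$ and its inverse $K_{\delta}^{-1}$, with symbols $k(\delta\xi)$ and $1/k(\delta\xi)$, are bounded on every $H^{s}$ with $\delta$-independent norms $c_{2}$ and $1/c_{1}$.

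Next I would set up the initial data for the system. Given $(u_{0},u_{1})=(\varphi,\psi_{x})$, I take $w_{0}=\psi$ so that $u_{1}=(w_{0})_{x}=\psi_{x}$, and set $v_{0}=K_{\delta}^{-1}\psi$. By the uniform bound $\Vert K_{\delta}^{-1}\Vert\le 1/c_{1}$ just noted, $v_{0}\in H^{s+P}$ with $\Vert v_{0}\Vert_{H^{s+P}}\le \frac{1}{c_{1}}\Vert\psi\Vert_{H^{s+P}}$; thus the pair $(\varphi,v_{0})$ is bounded in $H^{s+P}\times H^{s+P}$ uniformly in $\delta$. With this choice the initial-value problem (\ref{nw})--(\ref{inidata}) is equivalent to (\ref{system1})--(\ref{system2}), as recorded in Section \ref{sec2}.

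I would then apply Theorem \ref{theo2.1} to the system (\ref{system1})--(\ref{system2}) with $K_{\delta}$ in place of $K$. By Remark \ref{rem2.2}, the a priori bounds furnished by that theorem depend only on $T$, the bounds on the initial data, and the operator norm of $K_{\delta}$; since all three are independent of $\delta$, the resulting family $\left(u^{\epsilon,\delta},v^{\epsilon,\delta}\right)$ is bounded in $C\big([0,\tfrac{T}{\epsilon^{p}}];H^{s+D}\big)\cap C^{1}\big([0,\tfrac{T}{\epsilon^{p}}];H^{s+D-1}\big)$ uniformly in both $\epsilon$ and $\delta$. Reading off the first component and using the equivalence from the previous step yields the asserted solution $u^{\epsilon,\delta}$ of (\ref{nw}) with the stated regularity and uniform bounds; uniqueness is inherited from that of the system.

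Since the substantive analytic content---long-time existence and the Nash--Moser a priori estimates---is already contained in Theorem \ref{theo2.1}, I do not expect a genuine obstacle here. The only point requiring care is the uniformity in $\delta$, and this is precisely what the rescaling identity $\widehat{\beta_{\delta}}(\xi)=\widehat{\beta}(\delta\xi)$ secures: because it leaves the range of the symbol unchanged, every $\delta$-dependent operator bound collapses to the corresponding $\delta$-independent bound for $K$, and Remark \ref{rem2.2} then propagates this uniformity from the operator to the solution.
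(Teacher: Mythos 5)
Your proposal is correct and follows essentially the same route as the paper: the corollary is obtained by applying Theorem \ref{theo2.1} with $\beta$ replaced by $\beta_{\delta}$, where the uniformity in $\delta$ rests exactly on the observation of Remark \ref{rem2.2} that $\widehat{\beta_{\delta}}(\xi)=\widehat{\beta}(\delta\xi)$ has the same range $[c_{1}^{2},c_{2}^{2}]$, so $K_{\delta}$, $K_{\delta}^{-1}$ carry $\delta$-independent bounds. Your passage through the system (\ref{system1})--(\ref{system2}) with $v_{0}=K_{\delta}^{-1}\psi$ is precisely the equivalence the paper records in Section \ref{sec2}, so there is nothing genuinely different to compare.
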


\section{Comparison of Solutions}\label{sec3}

In this section we prove the main result of this paper:  the difference between the  solutions of the Cauchy problems corresponding to two different kernel functions remains small over a long time interval in a suitable Sobolev norm if the kernels have the same dispersive nature in the long-wave limit.

Consider two different kernel functions $\beta^{(1)}$ and $\beta ^{(2)}$ satisfying the following three conditions for some $k\geq 1$:
\begin{itemize}
    \item[(C1)] $\beta^{(1)}$ and $\beta ^{(2)}$ satisfy the ellipticity and boundness condition (\ref{bounds}),
    \item[(C2)] $\beta^{(1)}$ and $\beta ^{(2)}$ have the same first $(2k-1)$-order moments, namely
        \begin{equation}
            \int x^{j}\beta ^{(1)}(x)dx=\int x^{j}\beta ^{(2)}(x)dx~~~\mbox{ for }~~0\leq j<2k-1,  \label{moments-real}
        \end{equation}
    \item[(C3)]  $x^{2k}\beta ^{(i)}(x)\in L^{1}\left( \mathbb{R}\right) $ ($i=1,2$).
\end{itemize}
Clearly in the case when $\beta =\mu $ is a finite measure, the moment integral should be replaced by $\int x^{j}d\mu$.
We consider (\ref{nw})-(\ref{inidata}) with $\beta^{(1)}$ and $\beta ^{(2)}$  and denote the corresponding solutions by $u_{1}^{\epsilon, \delta}$ and $u_{2}^{\epsilon, \delta}$, respectively. Our aim is to estimate the difference $u_{1}^{\epsilon, \delta}-u_{2}^{\epsilon, \delta}$.

Before proceeding to state the main result, we  introduce a lemma which provides   certain commutator estimates (see  Proposition B.8 of \cite{Lannes2013}).
\begin{lemma}\label{lem3.1}
    \noindent\ Let $q_{0}>1/2$, $~s\geq 0$ and let   $[\Lambda^s, w]g=\Lambda^s(wg)-w\Lambda^s g  $ with $\Lambda =\left( 1-D_{x}^{2}\right)^{1/2}$.
    \begin{enumerate}
    \item If \  $0\leq s\leq q_{0}+1$ and $w\in H^{q_{0}+1}$ then, for all $g\in H^{s-1}$, one has
        \begin{equation*}
            \Vert [ \Lambda^{s}, w]g\Vert_{L^{2}}\leq C\Vert w_{x}\Vert_{H^{q_{0}}}\Vert g\Vert_{H^{s-1}},
    \end{equation*}
    \item If $-q_{0}< r\leq q_{0}+1-s$ and $w\in H^{q_{0}+1}$ then, for all $g\in H^{r+s-1}$, one has
        \begin{equation*}
            \Vert [ \Lambda^{s}, w]g\Vert_{H^{r}}\leq C\Vert w_{x}\Vert_{H^{q_{0}}}\Vert g\Vert_{H^{r+s-1}}.
        \end{equation*}

    \end{enumerate}
\end{lemma}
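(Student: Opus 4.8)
The plan is to prove both estimates by a Fourier-analytic, Littlewood--Paley decomposition of the bilinear commutator, exploiting the fact that its symbol vanishes on the diagonal. Writing $\langle\xi\rangle=(1+\xi^2)^{1/2}$ so that $\Lambda^s$ has symbol $\langle\xi\rangle^s$, one has, up to a harmless constant,
\[
\mathcal{F}\big([\Lambda^s,w]g\big)(\xi)=\int \big(\langle\xi\rangle^s-\langle\eta\rangle^s\big)\,\widehat{w}(\xi-\eta)\,\widehat{g}(\eta)\,d\eta .
\]
The key point is that the multiplier $\sigma(\xi,\eta)=\langle\xi\rangle^s-\langle\eta\rangle^s$ vanishes when $\xi-\eta=0$, i.e. at zero frequency of $w$; this is what lets the commutator trade a derivative of $w$ against a derivative of $g$. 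First I would split the $\eta$-integration into the region $A=\{\langle\xi-\eta\rangle\le\frac12\langle\eta\rangle\}$, where $w$ oscillates slowly relative to $g$, and its complement $B$, where $w$ carries the high frequency.

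On region $A$ one has $\langle\xi\rangle\sim\langle\eta\rangle$, and the mean value theorem gives the pointwise bound $|\sigma(\xi,\eta)|\le C\,|\xi-\eta|\,\langle\eta\rangle^{s-1}$. The factor $|\xi-\eta|$ converts $\widehat{w}$ into $\widehat{w_x}$, while $\langle\eta\rangle^{s-1}\widehat{g}$ is the Fourier transform of $\Lambda^{s-1}g$. After factoring out the residual symbol $\sigma(\xi,\eta)/(|\xi-\eta|\,\langle\eta\rangle^{s-1})$ --- which is a bounded Coifman--Meyer (Mikhlin-type) bilinear multiplier on $A$ --- this contribution is dominated by a product $w_x\cdot\Lambda^{s-1}g$ and hence by $\Vert w_x\Vert_{L^\infty}\Vert g\Vert_{H^{s-1}}$. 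The embedding $H^{q_0}\hookrightarrow L^\infty$, valid precisely because $q_0>1/2$, then yields the bound $C\Vert w_x\Vert_{H^{q_0}}\Vert g\Vert_{H^{s-1}}$. This is the main, and essentially sharp, contribution.

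Next I would treat region $B$, the high-low and high-high interactions, where $\langle\xi\rangle,\langle\eta\rangle\le C\langle\xi-\eta\rangle$ and so $|\sigma(\xi,\eta)|\le \langle\xi\rangle^s+\langle\eta\rangle^s\le C\langle\xi-\eta\rangle^s$. Here all the regularity must be placed on $w$: distributing $\langle\xi-\eta\rangle^s$ (and the factor $\langle\eta\rangle^{1-s}$ when $s>1$, which is $\le\langle\xi-\eta\rangle^{1-s}$ on $B$) one is led to estimate a convolution of $\langle\cdot\rangle^{s}\widehat{w}$ against $\langle\cdot\rangle^{s-1}\widehat{g}$. Placing the $w$-factor in $L^2$ and the $g$-factor in $L^1$ --- the latter using $q_0>1/2$, so that $\langle\cdot\rangle^{-q_0}\in L^2$ turns an $H^{q_0}$-bound into an $L^1$-bound on the Fourier side --- closes the estimate provided $w\in H^{s}$. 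This is exactly where the hypothesis $s\le q_0+1$ enters, since on the high frequencies of $B$ one has $\Vert w\Vert_{H^s}\le C\Vert w_x\Vert_{H^{s-1}}\le C\Vert w_x\Vert_{H^{q_0}}$. Collecting regions $A$ and $B$ gives statement~1.

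Finally, statement~2 follows by running the same decomposition on $\Lambda^r[\Lambda^s,w]g$, i.e. inserting the extra weight $\langle\xi\rangle^r$ into the multiplier and redistributing it among the two factors via $\langle\xi\rangle\le C\langle\xi-\eta\rangle\,\langle\eta\rangle$. The two constraints on $r$ are exactly what keep every resulting bilinear piece bounded: the upper bound $r\le q_0+1-s$ keeps the total number $r+s$ of derivatives falling on $w$ at most $q_0+1$, so the $H^{q_0}$-norm of $w_x$ still controls it, while the lower bound $r>-q_0$ keeps the negative-order pairing on the $g$-side square-integrable, again by $q_0>1/2$. I expect the genuine difficulty to lie not in the low-high piece (region $A$), which is the textbook mean-value-theorem gain, but in the high-high interactions of region $B$: there one must verify that the relevant symbols are uniformly Coifman--Meyer multipliers and track precisely which Sobolev index each factor lands in, since for $0\le s<1$ the datum $g$ lives in the negative-order space $H^{s-1}$ and its derivatives may not be freely discarded. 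Balancing these borderline pairings is the crux of the argument and is the source of all the constraints relating $r$, $s$ and $q_0$.
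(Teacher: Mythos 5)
You should first be aware that the paper does not prove this lemma at all: it is quoted verbatim, as the text indicates, from Proposition B.8 of Lannes's book \cite{Lannes2013}, whose proof is a careful paradifferential (Littlewood--Paley) argument. Your general strategy -- writing the commutator as a bilinear multiplier with symbol $\sigma(\xi,\eta)=\langle\xi\rangle^{s}-\langle\eta\rangle^{s}$, exploiting its vanishing on the diagonal, and splitting into frequency regions -- is the right family of techniques, and your region $A$ is essentially sound: there the mean value theorem gives $|\sigma|\le C|\xi-\eta|\langle\eta\rangle^{s-1}$, and the residual symbol $\sigma(\xi,\eta)/\bigl((\xi-\eta)\langle\eta\rangle^{s-1}\bigr)$ (use $(\xi-\eta)$ rather than $|\xi-\eta|$, so that the factor is smooth across the diagonal) does satisfy Mikhlin-type bounds on $A$ once a smooth cutoff is inserted, which still needs to be checked explicitly before invoking Coifman--Meyer.

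The genuine gap is in region $B$, in two respects. First, your H\"older/Young pairing is backwards: you propose putting the $g$-factor in $L^{1}$ on the Fourier side, but $g$ lies only in $H^{s-1}$, a space of possibly negative order (the lemma allows $s=0$), so no Fourier $L^{1}$ bound for $g$ exists; the ``$q_{0}>1/2$ makes $\langle\cdot\rangle^{-q_{0}}\in L^{2}$'' trick converts an $H^{q_{0}+1}$ bound on $w$, not on $g$, into an $L^{1}$ bound. Flipping the pairing ($w$-factor in Fourier $L^{1}$ via Cauchy--Schwarz, $g$-factor in $L^{2}$) repairs the argument only for $s<q_{0}+\tfrac12$: in the range $q_{0}+\tfrac12\le s\le q_{0}+1$ permitted by the lemma, the weight one must sum, $\|\langle\cdot\rangle^{s-1}\widehat{w_{x}}\|_{L^{1}}$, is \emph{not} controlled by $\|w_{x}\|_{H^{q_{0}}}$, and no absolute-value/Young scheme can close; this endpoint range is precisely where genuine Littlewood--Paley almost-orthogonality (square-function summation of the high-high blocks) is required, and that is the real content of Lannes's proof. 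Second, region $B$ is not ``where $w$ carries the high frequency'': since $\langle\xi-\eta\rangle\ge\tfrac12\langle\eta\rangle$ holds automatically whenever $|\eta|\le\sqrt{3}$, $B$ contains a full neighborhood of the diagonal at low frequencies, and there your substitute inequality $\|w\|_{H^{s}}\le C\|w_{x}\|_{H^{s-1}}$ is false (take $\widehat{w}$ supported in $|\zeta|\le 1$, or $w$ constant). As written, your region-$B$ bound degrades to $\|w\|_{H^{q_{0}+1}}$, which does not vanish for constant $w$ even though the commutator does, so the stated estimate with $\|w_{x}\|_{H^{q_{0}}}$ is lost; recovering it forces you to exploit the diagonal vanishing of $\sigma$ (mean value theorem) on this low-frequency part of $B$ as well. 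So the proof needs at least a three-way splitting plus an orthogonality argument, not the two-region Young scheme you describe; alternatively, simply cite \cite{Lannes2013} as the paper does, noting that for the paper's application in Theorem \ref{theo3.2} the weaker bound $C\|w\|_{H^{q_{0}+1}}\|g\|_{H^{s-1}}$ would in fact suffice.
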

We will use this lemma in finding the energy estimates for the system (\ref{r-sys})-(\ref{rho-sys}) to be introduced below. We are now ready to prove the main result.
\begin{theorem}\label{theo3.2}
    Let $\beta^{(1)}$ and $\beta^{(2)}$ be two kernels satisfying the conditions (C1), (C2) and (C3) for  some $k\geq 1$. Let $P$, $D$, $s$ be as in Theorem \ref{theo2.1} and $\varphi, \psi  \in H^{s+P+2k+1}$. Then there are some constants $\epsilon_{0}$, $C$ and $T>0$ \ independent of $\epsilon$ ($0<\epsilon < \epsilon_{0}$) and $\delta $ so that the solutions $u_{i}^{\epsilon,\delta }$ of the Cauchy problems
    \begin{eqnarray}
        && u_{tt} =\beta _{\delta }^{\left( i\right) }\ast (u+\epsilon ^{p}u^{p+1})_{xx},~~~~~x\in \mathbb{R}, ~~~~t>0,  \label{main-a} \\
        && u(x,0) =\varphi (x),~~~~u_{t}(x,0)=\psi _{x}(x),~~~~x\in \mathbb{R},  \label{main-b}
    \end{eqnarray}%
    for $\ i=1,2$ are defined for all $~t\in \left[ 0,\frac{T}{\epsilon ^{p}}\right] $ and satisfy
    \begin{equation}
        \Vert u_{1}^{\epsilon ,\delta }\left( t\right) -u_{2}^{\epsilon ,\delta}\left( t\right) \Vert _{H^{s+D}}
            \leq C\delta ^{2k}(1+t)\mbox{ \ \ for all \ }t\leq \frac{T}{\epsilon ^{p}}. \label{mainresult}
    \end{equation}
\end{theorem}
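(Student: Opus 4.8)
The plan is to recast both Cauchy problems as first-order systems, derive one system governing the difference of the two solutions, and then close a Gronwall estimate in which every error term is either genuinely of size $\delta^{2k}$ or carries a full compensating factor $\epsilon^p$. Writing the first-order form (\ref{system1})--(\ref{system2}) for each kernel, I have $u_{i,t}=K_\delta^{(i)}v_{i,x}$ and $v_{i,t}=K_\delta^{(i)}(u_i+\epsilon^p u_i^{p+1})_x$ with the common data $u_i(\cdot,0)=\varphi$, $v_i(\cdot,0)=(K_\delta^{(i)})^{-1}\psi$. Setting $r=u_1-u_2$, $\rho=v_1-v_2$ and using the factorization $u_1^{p+1}-u_2^{p+1}=g\,r$ with $g=\sum_{j=0}^{p}u_1^{j}u_2^{p-j}$, the difference obeys
\begin{align*}
r_t &= K_\delta^{(1)}\rho_x + (K_\delta^{(1)}-K_\delta^{(2)})v_{2,x}, \\
\rho_t &= K_\delta^{(1)}\big((1+\epsilon^p g)r\big)_x + (K_\delta^{(1)}-K_\delta^{(2)})(u_2+\epsilon^p u_2^{p+1})_x,
\end{align*}
with $r(\cdot,0)=0$ and $\rho(\cdot,0)=\big((K_\delta^{(1)})^{-1}-(K_\delta^{(2)})^{-1}\big)\psi$. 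The two terms involving $K_\delta^{(1)}-K_\delta^{(2)}$ are the only forcing, and everything hinges on showing that this operator difference is small.

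The crucial input is that (C2)--(C3) force the two multiplier symbols to agree to high order at $\xi=0$. Expanding $e^{-i\xi x}=\sum_{j=0}^{2k-1}\frac{(-i\xi x)^j}{j!}+O(|\xi x|^{2k})$, the equality of the moments up to order $2k-1$ (the odd one of order $2k-1$ vanishing by evenness of each kernel) makes the polynomial part integrate to zero against $\beta^{(1)}-\beta^{(2)}$, so (C3) gives $|\widehat{\beta^{(1)}}(\xi)-\widehat{\beta^{(2)}}(\xi)|\le C|\xi|^{2k}$. Since $\widehat{\beta^{(i)}}\ge c_1^2>0$, the identity $\sqrt a-\sqrt b=(a-b)/(\sqrt a+\sqrt b)$ upgrades this to the same bound for the symbols $k^{(i)}=\sqrt{\widehat{\beta^{(i)}}}$ and for their reciprocals; after the scaling $k^{(i)}_\delta(\xi)=k^{(i)}(\delta\xi)$ it reads $|k^{(1)}_\delta(\xi)-k^{(2)}_\delta(\xi)|\le C\delta^{2k}|\xi|^{2k}$. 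Abbreviating $\sigma=s+D$, this yields the operator estimate $\|(K_\delta^{(1)}-K_\delta^{(2)})w\|_{H^{\sigma}}\le C\delta^{2k}\|w\|_{H^{\sigma+2k}}$, and likewise for the inverses. Invoking Corollary \ref{cor2.3} with $s$ replaced by $s+2k+1$ (legitimate since $\varphi,\psi\in H^{s+P+2k+1}$) makes $u_2,v_2$ bounded in $H^{\sigma+2k+1}$ uniformly in $\epsilon,\delta$; hence both forcing terms are $O(\delta^{2k})$ in $H^{\sigma}$ and $\|\rho(\cdot,0)\|_{H^{\sigma}}\le C\delta^{2k}$.

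I would then run an energy estimate at the level $H^\sigma$. Applying $\Lambda^{\sigma}$ and writing $\tilde r=\Lambda^{\sigma}r$, $\tilde\rho=\Lambda^{\sigma}\rho$, I use the energy
\[
E(t)=\tfrac12\big\langle(1+\epsilon^p g)\tilde r,\tilde r\big\rangle+\tfrac12\langle\tilde\rho,\tilde\rho\rangle,
\]
which is coercive once $\epsilon$ is small enough that $1+\epsilon^p g\ge\tfrac12$. Because $K_\delta^{(1)}$ is self-adjoint and commutes with $D_x$ and $\Lambda^{\sigma}$, an integration by parts makes the two leading contributions $\langle K_\delta^{(1)}((1+\epsilon^p g)\tilde r)_x,\tilde\rho\rangle$ and $\langle(1+\epsilon^p g)K_\delta^{(1)}\tilde\rho_x,\tilde r\rangle$ cancel exactly. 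What survives is the commutator term $\epsilon^p\langle K_\delta^{(1)}([\Lambda^{\sigma},g]r)_x,\tilde\rho\rangle$, the two forcing pairings, and $\tfrac{\epsilon^p}{2}\langle g_t\tilde r,\tilde r\rangle$. The commutator is the delicate piece, because the factor $(\cdot)_x$ costs one derivative; I would control it by part~2 of Lemma \ref{lem3.1} with $q_0=\sigma$, giving $\|[\Lambda^{\sigma},g]r\|_{H^1}\le C\|g_x\|_{H^{\sigma}}\|r\|_{H^{\sigma}}$, which requires $g\in H^{\sigma+1}$---exactly the regularity supplied by $u_1,u_2\in H^{\sigma+2k+1}$. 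Bounding the forcing pairings by $C\delta^{2k}\sqrt E$ and the commutator and $g_t$ terms by $C\epsilon^p E$ (the latter using $g_t\in L^\infty$), I arrive at $E'(t)\le C\epsilon^p E+C\delta^{2k}\sqrt E$.

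Finally, with $y=\sqrt E$ this inequality becomes $y'\le\tfrac{C}{2}\epsilon^p y+\tfrac{C}{2}\delta^{2k}$; integrating this linear differential inequality on $[0,T/\epsilon^p]$, where $\epsilon^p t\le T$, and using $e^{x}-1\le xe^{x}$ gives $y(t)\le C\big(y(0)+\delta^{2k}t\big)$. Since $y(0)\le C\delta^{2k}$ from the initial estimate on $\rho$, and $\|u_1^{\epsilon,\delta}(t)-u_2^{\epsilon,\delta}(t)\|_{H^{\sigma}}=\|r(t)\|_{H^{\sigma}}\le C\sqrt{E(t)}$, this produces exactly (\ref{mainresult}). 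I expect the principal obstacle to be the scale bookkeeping rather than any isolated estimate: one must check that each error is truly $O(\delta^{2k})$ (the forcing and the datum $\rho(\cdot,0)$) or truly carries a factor $\epsilon^p$ (the commutator and the $g_t$ term), since only then does $\epsilon^p$ cancel the $\mathcal O(1/\epsilon^p)$ length of the time interval and leave the clean bound $\delta^{2k}(1+t)$. The derivative loss inside the commutator, absorbed via Lemma \ref{lem3.1} at the price of $2k+1$ extra derivatives on the data, is the technical crux behind the hypothesis $\varphi,\psi\in H^{s+P+2k+1}$.
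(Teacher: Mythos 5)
Your proposal is correct and follows essentially the same route as the paper: the same first-order reformulation and difference system (you merely take $K_\delta^{(1)}$ rather than $K_\delta^{(2)}$ as the reference operator), the same moment-based symbol estimate $\|(K_\delta^{(1)}-K_\delta^{(2)})w\|_{H^\sigma}\leq C\delta^{2k}\|w\|_{H^{\sigma+2k}}$, the same $\epsilon^p$-weighted energy controlled via the commutator bounds of Lemma \ref{lem3.1}, and the same Gronwall argument yielding $E(t)\leq C(E(0)+\delta^{2k}t)$. The only differences are cosmetic---your energy places the weight $1+\epsilon^p g$ after $\Lambda^\sigma$ whereas the paper uses $\langle r,wr\rangle_{s+D}$, and you spell out the Taylor-remainder argument behind the symbol estimate that the paper states briefly---so the two proofs are mathematically the same.
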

\begin{proof}
    We will complete the proof in several steps. For the rest of the proof we will drop the superscripts $\epsilon ,\delta $ to simplify the notation.
    \begin{description}
    \item[Step 1] Since $\varphi, \psi \in H^{s+P+2k+1}$, Theorem \ref{theo2.1} gives the uniform bound
        \begin{displaymath}
            \left\Vert u_{i}(t)\right\Vert _{H^{s+D+2k+1}}
                +\left\Vert (u_{i}) _{t}(t)\right\Vert _{H^{s+D+2k}}
                \leq C \mbox{ \ \ for all\ }t\leq \frac{T}{\epsilon ^{p}},~~~(i=1,2)
        \end{displaymath}
        for both families of solutions.
    \item[Step 2]
    Converting  (\ref{main-a})-(\ref{main-b}) into the corresponding systems of the form (\ref{system1})-(\ref{system2}) we obtain
    \begin{eqnarray*}
      &&  ( u_{i})_{t} =K_{\delta }^{(i)}( v_{i})_{x},~~~~u_{i}(x,0)=\varphi(x), \\
      &&  ( v_{i})_{t} =K_{\delta }^{(i)}\big(u_{i}+\epsilon^{p}u_{i}^{p+1}\big)_{x},
                ~~~~v_{i}(0)(x,0)=(K_{\delta}^{(i)})^{-1}\psi(x)
    \end{eqnarray*}%
    for $\ i=1,2$. Then the pair $(r, \rho)$, which are defined by the differences $r=u_{1}-u_{2}$ and $\rho =v_{1}-v_{2}$ between the solutions, satisfy:
    \begin{eqnarray}
       && r_{t} =K_{\delta }^{(2)}\rho _{x}+f_{1},\quad \quad  r(x,0)=0, \label{r-sys} \\
       && \rho_{t} =K_{\delta }^{(2)}r_{x}+\epsilon ^{p}K_{\delta }^{(2)}(wr)_{x}+f_{2},
                \quad \quad       \rho(x,0)=g(x),  \label{rho-sys}
    \end{eqnarray}%
    where
    \begin{eqnarray}
    && f_{1} =\big(K_{\delta }^{(1)}-K_{\delta }^{(2)}\big)(v_{1})_{x}, \\
    && f_{2} =\big(K_{\delta}^{(1)}-K_{\delta}^{(2)}\big)
            \big(u_{1}+\epsilon^{p}u_{1}^{p+1}\big)_{x}, \\
    && g     =\Big(\big(K_{\delta}^{(1)}\big)^{-1}-\big(K_{\delta }^{(2)}\big)^{-1}\Big)\psi, \\
    && w    =u_{1}^{p}+u_{1}^{p-1}u_{2}+\dots +u_{1}u_{2}^{p-1}+u_{2}^{p}.
    \end{eqnarray}%
    \item[Step 3]
        Conditions (C2) and (C3) mean that the Fourier transforms of the kernels satisfy $\widehat{\beta ^{(i)}}\in C^{2k}$ for $i=1,2$ and
        \begin{equation}
            \frac{d^{j}}{d\xi^{j}}\left( \widehat{\beta ^{(2)}}(\xi )-\widehat{\beta^{(1)}}(\xi )\right) \Big|_{\xi =0}=0~~\mbox{ for }~~0\leq j<2k-1. \label{moments-fourier}
        \end{equation}
        Then  $\widehat{\beta ^{(2)}}(\xi )-\widehat{\beta ^{(1)}}(\xi )=\mathcal{O}\left( \xi ^{2k}\right)$ and $k^{(2)}(\xi )-k^{(1)}(\xi )=\mathcal{O}\left( \xi ^{2k}\right) $ near the origin. Thus, we have
        \begin{displaymath}
            k^{(2)}(\xi )-k^{(1)}(\xi )=\xi ^{2k}m(\xi )
        \end{displaymath}%
        for some continuous function $m$. Moreover, as both $k^{(i)}(\xi )$ $(i=1,2)$ are bounded, then so is $m(\xi )$. Then the corresponding operators $K_{\delta}^{(i)}$ will satisfy
        \begin{displaymath}
            K_{\delta }^{(2)}=K_{\delta }^{(1)}+\left( -1\right) ^{k}\delta^{2k}D_{x}^{2k}M_{\delta },
        \end{displaymath}%
        where $M_{\delta }$ is the operator with symbol $m(\delta \xi )$. Since $m$ is bounded, we have the  estimate
        \begin{equation}
            \left\Vert \big(K_{\delta }^{(2)}-K_{\delta }^{(1)}\big)u\right\Vert_{H^{s}}
            = \delta^{2k}\left\Vert  D_{x}^{2k}M_{\delta }u\right\Vert _{H^{s}}
            \leq C\delta^{2k}\left\Vert u\right\Vert _{H^{s+2k}},  \label{K2K1est}
        \end{equation}%
        with the constant $C$ independent of $\delta $.
    \item[Step 4]
        We now estimate the terms $g$, $w$, $f_{1}$, $f_{2}$ appearing in (\ref{r-sys})-(\ref{rho-sys}).
        \begin{enumerate}
        \item Noting that
            \begin{eqnarray*}
                g&=&\Big(\big(K_{\delta }^{(1)}\big)^{-1}-\big(K_{\delta}^{(2)}\big)^{-1}\Big)\psi \\
                 &=&\big(K_{\delta }^{(1)}\big)^{-1}\big(K_{\delta}^{(2)}\big)^{-1}\big(K_{\delta }^{(2)}-K_{\delta }^{(1)}\big)\psi
            \end{eqnarray*}
             and that $\big(K_{\delta}^{(i)}\big)^{-1}$ $(i=1,2)$  are uniformly bounded, we have
            \begin{eqnarray*}
                \left\Vert g\right\Vert _{H^{s+P}}
                 &\leq & C\left\Vert \big( K_{\delta}^{(2)}-K_{\delta }^{(1)}\big)\psi \right\Vert _{H^{s+P}} \\
                &\leq & C\delta^{2k}\left\Vert \psi \right\Vert _{H^{s+P+2k}}\leq C\delta^{2k},
            \end{eqnarray*}
            where we have made use of (\ref{K2K1est}).
        \item From the definition $w=u_{1}^{p}+u_{1}^{p-1}u_{2}+\dots +u_{2}^{p}$, we get
           \begin{eqnarray}
              &&\left\Vert w(t)\right\Vert _{H^{s+D}}
                \leq C\sum_{i=0}^{p}\left\Vert u_{1}(t)\right\Vert _{H^{s+D}}^{p-i}\left\Vert u_{2}(t)\right\Vert_{H^{s+D}}^{i}
                \leq C, \label{estW}\\
              && \left\Vert w_{t}(t)\right\Vert _{H^{s+D-1}}
                \leq C\Big( \left\Vert \left(u_{1}\right) _{t}(t)\right\Vert _{H^{s+D-1}}
                +\left\Vert \left( u_{2}\right)_{t}(t)\right\Vert _{H^{s+D-1}}\Big) \leq C. \nonumber \\
                \label{estWt}
            \end{eqnarray}
        \item We have
            \begin{eqnarray}
                \left\Vert f_{1}(t)\right\Vert _{H^{s+D}}
                &=& \left\Vert \big(K_{\delta}^{(1)}-K_{\delta }^{(2)}\big)(v_{1})_{x}(t)\right\Vert_{H^{s+D}} \nonumber \\
                &\leq & C\delta ^{2k}\left\Vert \left( v_{1}\right)_{x}(t)\right\Vert _{H^{s+D+2k}} \nonumber \\
                &\leq & C\delta ^{2k}\left\Vert v_{1}(t)\right\Vert_{H^{s+D+2k+1}}
                    \leq C\delta ^{2k}. \label{estF1}
            \end{eqnarray}
        \item Similarly,
            \begin{eqnarray}
                \left\Vert f_{2}(t)\right\Vert _{H^{s+D}}
                &=&\left\Vert \big(K_{\delta}^{(1)}-K_{\delta}^{(2)}\big)\big(u_{1}
                +\epsilon^{p}u_{1}^{p+1}\big)_{x}(t)\right\Vert_{H^{s+D}} \nonumber \\
            &\leq &C\delta^{2k}\left\Vert \big(u_{1}
                +\epsilon^{p}u_{1}^{p+1}\big)_{x}(t)\right\Vert_{H^{s+D+2k}}  \nonumber   \\
            &\leq &C\delta ^{2k}\left\Vert \big(u_{1}
                +\epsilon^{p}u_{1}^{p+1}\big)(t)\right\Vert _{H^{s+D+2k+1}}
            \leq C\delta^{2k}. \nonumber \\ \label{estF2}
        \end{eqnarray}
\end{enumerate}
    \item[Step 5] Next we define the energy
    \begin{equation}
        E^{2}(t)=\frac{1}{2}\left( \left\Vert r(t)\right\Vert _{H^{s+D}}^{2}+\left\Vert\rho (t)\right\Vert _{H^{s+D}}^{2}+\epsilon ^{p}\big\langle r(t),w(t)r(t)\big\rangle_{s+D}\right), \label{Esquare}
    \end{equation}%
    with the $H^{s+D}$ inner product $\big\langle f, g\big\rangle_{s+D}=\big\langle \Lambda^{s+D}f, \Lambda^{s+D}g\big\rangle $. Since
    \begin{displaymath}
        \epsilon ^{p}\Big\vert \big\langle r(t),w(t)r(t)\big\rangle_{s+D}\Big\vert
        \leq \epsilon ^{p}\left\Vert w(t)\right\Vert_{H^{s+D}}\left\Vert r(t)\right\Vert _{H^{s+D}}^{2}
        \leq C\epsilon^{p}\left\Vert r(t)\right\Vert _{H^{s+D}}^{2},
    \end{displaymath}%
    there is some $\epsilon_{1}\leq \epsilon _{0}$ so that for all $\epsilon<\epsilon _{1}$,
    \begin{equation}
        \left\Vert r(t)\right\Vert _{H^{s+D}}^{2}+\left\Vert \rho (t)\right\Vert_{H^{s+D}}^{2}\leq C E^{2}(t)
         \label{E est}
    \end{equation}%
    (see Lemma 3.1 of \cite{Erbay2018}  for details).     Differentiation both sides of (\ref{Esquare}) with respect to $t$ gives
        \begin{eqnarray}
        && \!\!\!\!\!\!\!\!\!\!\!\!\!\!
        \frac{d~}{dt}E^{2}(t)
        =\big\langle r, r_{t}\big\rangle_{s+D}+\big\langle \rho, \rho_{t}\big\rangle_{s+D} \nonumber\\
        && ~~~~~~~ +\frac{\epsilon ^{p}}{2}\Big(\big\langle r_{t}, wr\big\rangle_{s+D}
            +\big\langle r, w_{t}r\big\rangle_{s+D}+\big\langle r, wr_{t}\big\rangle_{s+D}\Big).
    \end{eqnarray}
    By making use of the system (\ref{r-sys})-(\ref{rho-sys})  in this equation we get
    \begin{eqnarray}
      && \!\!\!\!\!\!\!\!\!\!\!\!\!\!
        \frac{d~}{dt}E^{2}(t)
        = \big\langle r,f_{1}\big\rangle _{s+D}+\big\langle\rho ,f_{2}\big\rangle _{s+D}
        +\big\langle \rho,\epsilon^{p}K_{\delta }^{(2)}( wr)_{x}\big\rangle _{s+D} \nonumber\\
        && ~~~~~~ +\frac{\epsilon ^{p}}{2}\Big( \big\langle r_{t}, wr\big\rangle_{s+D}
            +\big\langle r, w_{t}r\big\rangle_{s+D}+\big\langle r, wr_{t}\big\rangle_{s+D}\Big), \label{E2time}
    \end{eqnarray}
    where we have made use of the identity $\big\langle r, K_{\delta }^{(2)}\rho_{x}\big\rangle_{s+D}=-\big\langle \rho, K_{\delta}^{(2)}r_{x}\big\rangle_{s+D}$.     Using (\ref{r-sys}) the third term on the right-hand side of this equation can be written as
     \begin{eqnarray}
    \big\langle \rho,\epsilon^{p}K_{\delta }^{(2)}( wr)_{x}\big\rangle _{s+D}
        &=& -\epsilon^{p}\big\langle K_{\delta }^{(2)}\rho_{x},wr\big\rangle_{s+D}          \nonumber\\
        &=& -\epsilon^{p}\big\langle r_{t},wr\big\rangle_{s+D}+\epsilon^{p}\big\langle f_{1},wr\big\rangle_{s+D}
    \end{eqnarray}
    Substitution of  this result into  (\ref{E2time}) yields
     \begin{eqnarray}
        && \!\!\!\!\!\!\!\!\!\!\!\!\!\!
        \frac{d~}{dt}E^{2}(t)
        =\big\langle r, f_{1}\big\rangle_{s+D}+\big\langle \rho, f_{2}\big\rangle_{s+D}
            +\epsilon^{p}\big\langle wr, f_{1}\big\rangle_{s+D}
            +\frac{\epsilon^{p}}{2}\big\langle r, w_{t}r\big\rangle_{s+D} \nonumber\\
        && ~~~~~~~ +\frac{\epsilon^{p}}{2}\Big( \big\langle r, w r_{t}\big\rangle_{s+D}
            -\big\langle r_{t}, wr\big\rangle_{s+D}\Big). \label{energy}
        \end{eqnarray}
    Noting that $\Lambda^{s+D}(wf)=[\Lambda^{s+D}, w]f+w\Lambda^{s+D} f$ we can rewrite the last term of (\ref{energy}) as
    \begin{eqnarray}
         \big\langle r, w r_{t}\big\rangle_{s+D}-\big\langle r_{t}, wr\big\rangle_{s+D}
        &=& \big\langle \Lambda^{s+D}r, \lbrack \Lambda^{s+D},w\rbrack r_{t}\big\rangle \nonumber \\
          &&  -\big\langle \Lambda^{s+D-1}r_{t}, \Lambda \lbrack \Lambda^{s+D},w\rbrack r\big\rangle.
    \end{eqnarray}%
     To estimate this term we start with
    \begin{eqnarray}
        \Big\vert \big\langle r, w r_{t}\big\rangle_{s+D}-\big\langle r_{t}, wr\big\rangle_{s+D}\Big\vert
        &\leq &C\Big(\big\Vert \Lambda^{s+D}r\big\Vert_{L^{2}}\big\Vert\lbrack \Lambda^{s+D},w]r_{t}\big\Vert_{L^{2}} \nonumber \\
        && +\big\Vert \Lambda^{s+D-1}r_{t}\big\Vert_{L^{2}}\big\Vert \Lambda \lbrack \Lambda^{s+D},w]r\big\Vert_{L^{2}}
        \Big ). \nonumber \\ \label{thirdt}
    \end{eqnarray}%
     On the other hand, using the commutator estimates given in Lemma \ref{lem3.1} we get
    \begin{eqnarray*}
       \big \Vert [\Lambda^{s+D},w]r_{t}\big\Vert_{L^{2}}
            &\leq  & C \Vert w\Vert_{H^{s+D+1}} \Vert r_{t} \Vert_{H^{s+D-1}},\\
             &\leq  & C \Vert w\Vert_{H^{s+D+1}}\Big ( \Vert \rho\Vert_{H^{s+D}} + \Vert f_1 \Vert_{H^{s+D-1}} \Big ),\\
        \big\Vert \Lambda [\Lambda^{s+D},w]r\big\Vert_{L^{2}}
            & = & \big\Vert [\Lambda^{s+D},w]r\big\Vert_{H^{1}}
            \leq  C \Vert w \Vert_{H^{s+D+1}} \Vert r\Vert_{H^{s+D}},
    \end{eqnarray*}
    and
    \begin{eqnarray*}
       \big \Vert \Lambda^{s+D-1} r_t\big \Vert_{L^{2}}
            & \leq &  C\Big (\big\Vert  \Lambda^{s+D-1} K_{\delta}^{(2)}\rho _{x}\big\Vert _{L^2}
            + \big\Vert \Lambda^{s+D-1}  f_1\big\Vert_{L^2}  \Big )\\
     &\leq & C\Big (\Vert \rho\Vert _{H^{s+D}}+  \Vert f_1\Vert_{H^{s+D-1}}  \Big ).
    \end{eqnarray*}
    By making use of  these results in (\ref{thirdt}) we obtain
    \begin{eqnarray}
      && \hspace*{-40pt} \Big\vert \big\langle r, w r_{t}\big\rangle_{s+D}
        -\big\langle r_{t}, wr\big\rangle_{s+D}\Big\vert \nonumber \\
      && ~~~~~  \leq  C  \Vert r\Vert _{H^{s+D}}  \Vert w\Vert _{H^{s+D+1}} \Big (\Vert \rho \Vert_{H^{s+D}}
        + \Vert f_1\Vert _{H^{s+D-1}} \Big ).
        \label{third}
    \end{eqnarray}
    Combining (\ref{third}) with  (\ref{energy}) yields
     \begin{eqnarray*}
     && \!\!\!\!\!\!\!\!\!\!\!\!\!\!\!\!
        \frac{d~}{dt}E^{2}(t)
       \leq C \Big (\Vert r\Vert _{H^{s+D}}\left\Vert f_{1}\right\Vert_{H^{s+D}}
            +\left\Vert \rho \right\Vert _{H^{s+D}}\left\Vert f_{2}\right\Vert_{H^{s+D}}
            \Big )\\
        && ~~~~~ +C\epsilon ^{p} \Vert r\Vert _{H^{s+D}} \Big (\Vert w\Vert _{H^{s+D}}\Vert f_1\Vert _{H^{s+D}}
        +\Vert w\Vert _{H^{s+D+1}}\left\Vert f_{1}\right\Vert _{H^{s+D-1}} \Big )\\
        && ~~~~~ +C\epsilon ^{p} \Vert r\Vert _{H^{s+D}}\Big (\Vert r\Vert _{H^{s+D}}\Vert w_{t}\Vert _{H^{s+D}}
                + \Vert \rho \Vert _{H^{s+D}}\Vert w\Vert _{H^{s+D+1}}   \Big ).
    \end{eqnarray*}%
     Using the estimates on  $\left\Vert w\right\Vert _{H^{s+D}}$,  $\left\Vert w_{t}\right\Vert_{H^{s+D}}$,   $\left\Vert f_{i}\right\Vert _{H^{s+D}}$  $(i=1,2)$ given by (\ref{estW}), (\ref{estWt}), (\ref{estF1}), (\ref{estF2}), respectively, we get
      \begin{eqnarray*}
     && \!\!\!\!\!\!\!\!\!\!\!\!\!\!\!\!
        \frac{d~}{dt}E^{2}(t)
       \leq C \delta^{2k}\Big (\Vert r\Vert _{H^{s+D}}+\Vert \rho \Vert _{H^{s+D}} \Big )
        +C\epsilon ^{p}\delta^{2k} \Vert r\Vert _{H^{s+D}}         \\
        && ~~~~~ +C\epsilon ^{p} \Vert r\Vert _{H^{s+D}}\Big (\Vert r\Vert _{H^{s+D}}
                + \Vert \rho \Vert _{H^{s+D}}   \Big ).
    \end{eqnarray*}%
    and
    \begin{displaymath}
       \frac{d~}{dt}E^{2}(t)\leq C\Big( \delta ^{2k}E(t)+\epsilon ^{p}E^{2}(t)\Big) .
    \end{displaymath}%
    Gronwall's lemma then implies
    \begin{equation}
        E(t)\leq e^{C\epsilon^{p}t}E(0)+\frac{e^{C\epsilon ^{p}t}-1}{\epsilon^{p}}\delta^{2k}
        \leq e^{CT}\big( E(0)+C\delta ^{2k}t\big) \label{gronwall}
    \end{equation}%
    for all $t\leq \frac{T}{\epsilon ^{p}}$. Equation (\ref{Esquare}) shows that the initial energy is
    \begin{displaymath}
        E(0)
        =\frac{1}{\sqrt{2}}\Big( \left\Vert r(0)\right\Vert_{H^{s+D}}^{2}
            +\left\Vert\rho (0)\right\Vert _{H^{s+D}}^{2}
            +\epsilon ^{p}\big\langle r(0),w(0)r(0)\big\rangle _{s+D}\Big)^{1/2}
    \end{displaymath}%
    from which we get
    \begin{displaymath}
        E(0)
        =\frac{1}{\sqrt{2}}\left\Vert g\right\Vert_{H^{s+D}}\leq C\left\Vert g\right\Vert_{H^{s+P}}\leq C\delta ^{2k}.
    \end{displaymath}%
    By making use of this result in (\ref{gronwall}) we obtain (\ref{mainresult}) and this completes the proof of the theorem.
    \end{description}
\end{proof}
\begin{remark}\label{rem3.3}
    In this remark, we show that  solutions of (\ref{nw}) for suitable kernels are well approximated by solutions of a high order improved Boussinesq-type equation. Suppose that  the kernel $\beta $  satisfies $x^{2k}\beta \in L^{1}\left( \mathbb{R}\right)$ and the ellipticity and boundedness condition (\ref{bounds}). We now assume that the  $2k$-order Taylor polynomial of $\left( \widehat{\beta}(\xi )\right)^{-1}$ about zero is as follows
    \begin{displaymath}
        \left( \widehat{\beta_{0}}(\xi )\right)^{-1}=1+\gamma _{1}\xi ^{2}+\gamma _{2}\xi ^{4}+\cdots+\gamma _{k-1}\xi ^{2(k-1)},
    \end{displaymath}
    with suitable constants $\gamma _{j}$ ($j=1, \cdots ,k-1)$. Clearly $\widehat{\beta_{0}}$ satisfies the ellipticity and boundedness condition (\ref{bounds}). The wave equation corresponding to the kernel $\beta_{0}$ is     the high order improved Boussinesq-type equation
    \begin{equation}
        Lu_{tt}-u_{xx}
        =\epsilon ^{p}(u^{p+1})_{xx} \label{highIBq}
    \end{equation}%
    where
    \begin{displaymath}
        L=I-\gamma _{1}\delta^{2}D_{x}^{2}+\gamma _{2}\delta^{4}D_{x}^{4}
            +\cdots +(-1)^{k-1}\gamma_{k-1}\delta^{2k-2}D_{x}^{2k-2}.
    \end{displaymath}%
    We now  compare  solutions of (\ref{nw}) to  solutions of (\ref{highIBq}).  In that respect we note that  the Taylor expansions for  $\left(\widehat{\beta}(\xi)\right)^{-1}$ and $\left(\widehat{\beta_{0}}(\xi)\right)^{-1}$ around the origin agree on all terms of degree less than $2k$.   Applying Theorem \ref{theo3.2} for the pair $(\beta,\beta_{0})$ we conclude that  solutions of (\ref{nw}) are well approximated by solutions of (\ref{highIBq}) with an approximation error of order $\delta^{2k}$. More explicitly, for sufficiently smooth initial conditions the solutions  corresponding to the pair $(\beta,\beta_{0})$ approximate each other at the order $\delta^{2k}$ over times $t\leq \frac{T}{\epsilon^{p}}$.

    In particular, for the improved Boussinesq equation (\ref{ib}) we have $k=2$, $\gamma _{1}=1$ and $\left( \widehat{\beta_{0}}(\xi )\right) ^{-1}=1+\xi ^{2}$. When we compare solutions of  (\ref{nw}) with those of (\ref{ib}), Theorem \ref{theo3.2}  will yield an approximation error of order $\delta^{4}$.
\end{remark}
\begin{remark}\label{rem3.4}
    Here we explain why the equality of the lowest-order moments of the kernels is more important than the similarity of the shapes of the kernels when comparing two nonlocal wave equations in the long-wave limit.  To this end, we consider three types of general perturbations of an arbitrary kernel function $\beta_{0}$, and apply Theorem \ref{theo3.2} to investigate the closeness of the corresponding solutions of (\ref{nw}). With some appropriate kernel function $\varphi$, we define the three perturbations in the form
    \begin{eqnarray*}
         \beta_{1} &=&(1-\nu )\beta_{0}+\nu \varphi , \\
         \beta_{2} &=&\varphi_{\nu }\ast \beta_{0},  \\
         \beta_{3} &=&\beta_{0}+aD_{x}^{2k}\varphi,
    \end{eqnarray*}%
     where $\varphi_{\nu}(x)=\frac{1}{\nu}\varphi (\frac{x}{\nu})$, $\nu \ll 1$ and $a$ are  nonzero  constants. We denote the corresponding solutions by $u_{i}^{\epsilon, \delta}$ (i=0,1,2,3). Clearly both $\beta_{1}$ and $\beta_{2}$ are classical small perturbations of $\beta_{0}$ whereas $\beta_{0}-\beta_{3}={\cal O}(a)$ can be quite large. On the other hand, if $\int x^{2}\varphi(x)dx\not=\int x^{2}\beta_{0}(x)dx$, then $\beta_{0}$ and $\beta_{1}$ will have unequal second moments. Then for the corresponding solutions of (\ref{nw}),  Theorem \ref{theo3.2} will only guarantee the estimate $u_{0}^{\epsilon,\delta}-u_{1}^{\epsilon,\delta }={\cal O}(\delta^{2})$ over long times. Similarly, when $\int x^{2}\varphi (x)dx\not=0$, the kernels $\beta_{0}$ and $\beta_{2}$ will have unequal second moments; hence  $u_{0}^{\epsilon,\delta}-u_{2}^{\epsilon,\delta }={\cal O}(\delta^{2})$. When $\varphi \in W^{2k,1},$ $\beta_{0}$ and $\beta _{3}$ will have equal moments up to order $2k-1$.  For $k\geq 2$, Theorem \ref{theo3.2} this time implies the much stronger estimate $u_{0}^{\epsilon,\delta }-u_{3}^{\epsilon,\delta }={\cal O}(\delta^{2k})$. These three examples show that  the behavior of solutions of (\ref{nw}) over large times is determined by the dispersive character of the kernel in the long-wave limit rather than its shape.
\end{remark}
\begin{remark}\label{rem3.5}
It is worth to note that, under the change of variable $\widetilde{u}=\epsilon u$, the Cauchy problem (\ref{nw})-(\ref{inidata}) becomes
\begin{eqnarray*}
    && \widetilde{u}_{tt}=\beta _{\delta}\ast (\widetilde{u}+\widetilde{u}^{p+1})_{xx},~~~~x\in \mathbb{R},~~~t>0,\\
    && \widetilde{u}(x,0)=\epsilon u_{0}(x), ~~~~ \widetilde{u}_{t}(x,0)=\epsilon u_{1}(x), ~~~~x\in \mathbb{R},
\end{eqnarray*}%
which shows that our results can also be interpreted as comparison results for the long-wave approximations of solutions with small initial data.
\end{remark}

\bibliographystyle{tfnlm}
\bibliography{references}

\begin{thebibliography}{10}
\providecommand{\url}[1]{\normalfont{#1}}
\providecommand{\urlprefix}{Available from: }

\bibitem{Duruk2010}
Duruk~N, Erbay~HA, Erkip~A. Global existence and blow-up for a class of
  nonlocal nonlinear {C}auchy problems arising in elasticity. Nonlinearity.
  2010;\hspace{0pt}23:107--118.

\bibitem{Erbay2018}
Erbay~HA, Erbay~S, Erkip~A. Long-time existence of solutions to nonlocal
  nonlinear bidirectional wave equations. Discrete Contin Dyn Syst.
  2018;\hspace{0pt}(To appear).

\bibitem{Bona2005}
Bona~JL, Colin~T, Lannes~D. Long wave approximations for water waves. Arch
  Ration Mech Anal. 2005;\hspace{0pt}178:373--410.

\bibitem{Constantin2009}
Constantin~A, Lannes~D. The hydrodynamical relevance of the {C}amassa-{H}olm
  and {D}egasperis-{P}rocesi equations. Arch Ration Mech Anal.
  2009;\hspace{0pt}192:165--186.

\bibitem{Lannes2012}
Lannes~D, Linares~F, Saut~JC. {T}he {C}auchy problem for the {E}uler-{P}oisson
  system and derivation of the {Z}akharov-{K}uznetsov equation. Prog Nonlinear
  Differ Equ Appl. 2013;\hspace{0pt}84:181--213.

\bibitem{Lannes2013}
Lannes~D. The water waves problem: Mathematical analysis and asymptotics (ams
  mathematical surveys and monographs, vol. 188). Providence, RI: American
  Mathematical Society; 2013.

\bibitem{Duchene2015}
Duchene~V, Israwi~S, Talhouk~R. A new fully justified asymptotic model for the
  propagation of internal waves in the {C}amassa-{H}olm regime. SIAM J Math
  Anal. 2015;\hspace{0pt}47:240--290.

\bibitem{Korteweg1895}
Korteweg~DJ, de~Vries~G. On the change of form of long waves advancing in a
  rectangular channel, and on a new type of long stationary waves. Philos Mag.
  1895;\hspace{0pt}39:422--443.

\bibitem{Benjamin1972}
Benjamin~TB, Bona~JL, Mahony~JJ. Model equations for long waves in nonlinear
  dispersive systems. Philos Trans R Soc Lond Ser A: Math Phys Sci.
  1972;\hspace{0pt}272:47--78.

\bibitem{Camassa1993}
Camassa~R, Holm~DD. An integrable shallow water equation with peaked solitons.
  Phys Rev Lett. 1993;\hspace{0pt}71:1661--1664.

\bibitem{Fornberg1978}
Fornberg~B, Whitham~GB. {A} numerical and theoretical study of certain
  nonlinear wave phenomena. Philos Trans R Soc Lond Ser A.
  1978;\hspace{0pt}289:373--404.

\bibitem{Craig1985}
Craig~W. An existence theory for water waves and the {B}oussinesq and
  {K}orteweg-de {V}ries scaling limits. Comm Partial Differential Equations.
  1985;\hspace{0pt}10:787--1003.

\bibitem{Schneider2000}
Schneider~G, Wayne~CE. The long-wave limit for the water wave problem. {I}.
  {T}he case of zero surface tension. Comm Pure Appl Math.
  2000;\hspace{0pt}53:1475--1535.

\bibitem{Alazman2006}
Alazman~AA, Albert~JP, Bona~JL, et~al. Comparisons between the {BBM} equation
  and a {B}oussinesq system. Adv Differential Equations.
  2006;\hspace{0pt}11:121--166.

\bibitem{Gallay2001}
Gallay~T, Schneider~G. {KP} description of unidirectional long waves. {T}he
  model case. Proc Roy Soc Edinburgh Sect A. 2001;\hspace{0pt}131:885--898.

\bibitem{Erbay2016}
Erbay~HA, Erbay~S, Erkip~A. The {C}amassa-{H}olm equation as the long-wave
  limit of the improved {B}oussinesq equation and of a class of nonlocal wave
  equations. Discrete Contin Dyn Syst. 2016;\hspace{0pt}36:6101--6116.

\bibitem{Alvarez2008}
Alvarez-Samaniego~B, Lannes~D. A {N}ash-{M}oser theorem for singular evolution
  equations. {A}pplication to the {S}erre and {G}reen-{N}aghdi equations.
  Indiana Univ Math Journal. 2008;\hspace{0pt}57:97--131.

\end{thebibliography}

\end{document}